\DeclareMathOperator{\Rep}{\textrm{Rep}}
\DeclareMathOperator{\Hom}{\textrm{Hom}}
\newcommand{\q}[1]{[#1]_q}
\newcommand{\att}[2]{\raisebox{-.5\height}{ \includegraphics[scale = #2]{diagrams/#1.pdf}}}
\newcolumntype{L}{>{$}l<{$}}
\theoremstyle{plain}
\newtheorem{thm}{Theorem}[section]
\newtheorem{cor}[thm]{Corollary}
\theoremstyle{definition}
\newtheorem{defn}[thm]{Definition}
\newtheorem{remark}[thm]{Remark}
\title{The construction of a $E_7$-like quantum subgroup of $SU(3)$}
\author{Cain Edie-Michell and Lance Marinelli}
\address{Cain Edie-Michell\\
University of New Hampshire\\
Durham, 
New Hampshire}
\email{cain.edie-michell@unh.edu}
\address{Lance Marinelli\\
University of New Hampshire\\
Durham, 
New Hampshire}
\email{lance.marinelli@unh.edu}
\date{}
\begin{document}

\maketitle
\begin{abstract}
In this short note we construct an embedding of the planar algebra for $\overline{\Rep(U_q(\mathfrak{sl}_3))}$ at $q = e^{2\pi i \frac{1}{24}}$ into the graph planar algebra of di Francesco and Zuber's candidate graph $\mathcal{E}_4^{12}$. Via the graph planar algebra embedding theorem we thus construct a rank 11 module category over $\overline{\Rep(U_q(\mathfrak{sl}_3))}$ whose graph for action by the vector representation is $\mathcal{E}_4^{12}$. This fills a small gap in the literature on the construction of $\overline{\Rep(U_q(\mathfrak{sl}_3))}$ module categories. As a consequence of our construction, we obtain the principal graphs of subfactors constructed abstractly by Evans and Pugh. 
\end{abstract}

\section{Introduction}
To every module category over a modular tensor category (MTC), there is an associated \textit{modular invariant}. This is a positive integer valued matrix commuting with the $SL(2, \mathbb{Z})$ representation of the MTC. These modular invariants are a useful tool for studying module categories, and have played a key role in classification efforts. However, the modular invariant is not a complete invariant. There are many examples of modular invariants which do not come from module categories \cite{UnPhys}, and also of distinct module categories with the same modular invariant \cite[Sections 11 and 12]{SU3}. A modular invariant is referred to as  \textit{physical} if it is realised by a module category. Even in the situation where a modular invariant is known to be physical, it can be difficult to determine the structure of the corresponding module categories.

A large class of MTC's come from the (semisimplified) representation theory of quantum groups at roots of unity \cite[Chapter 7]{Bak}. These categories are typically denoted $\overline{\Rep(U_q(\mathfrak{g}))}$. In the special case of the Lie algebra $\mathfrak{sl}_3$, the modular invariants were classified by Gannon \cite{SU3Mod}. In work of Evans and Pugh \cite{SU3}, all of the $SU(3)$ modular invariants were shown to be physical. For all bar one modular invariant, their proof was via explicit construction of the corresponding module categories (using Ocneanu cell systems). The remaining modular invariant was shown to be physical via a relative tensor product construction. As the relative tensor product of module categories is a difficult construction to work with in practice, the explicit structure of the corresponding module category has not been confirmed. It should also be noted that in \cite[Section 5.4]{SU3Sub} some structure of this module category is deduced based on an assumption on its corresponding algebra object. Further, in \cite{Ocneanu}, an explicit construction of this module category is claimed without detail.


The modular invariant in question can be found in \cite{SU3Mod} labelled as $\left(\mathcal{E}^{(2)}_{9}\right)^c$. There has been some work on deducing the module fusion graph (the graph representing the action of $\Lambda_1$ 
on the module) for the module category category corresponding to this modular invariant. In \cite{DiFran} Di Francesco and Zuber suggest the following graph (with some physical supporting evidence): 
\[\mathcal{E}_4^{12}\quad := \quad \att{graph}{.29}.\]
As it will be useful throughout this paper, the Frobenius-Perron eigenvector for this graph is
\[\lambda = \left\{\frac{\q{5}}{\q{3}},\frac{\q{5}}{\q{3}},  \frac{\q{2}\q{4}}{\q{3}} ,  \frac{\q{2}\q{4}}{\q{3}} ,\q{3}  ,\q{5},\q{3},1,\q{5},\q{3},1   \right\}.\]
In this paper, we fix a small gap in the literature by explicitly constructing a module category with module fusion graph $\mathcal{E}_4^{12}$. Our technique for constructing this module category is by using the graph planar algebra embedding theorem \cite[Theorem 1.3]{EH}. The use of this technique is typically been referred to as \textit{cell systems} in the context of quantum groups \cite{Ocneanu, SU3, EvansSO3}. More precisely, we find the following element of $oGPA(\mathcal{E}_4^{12})$. We direct the reader to Subsection~\ref{sub:GPA} for the definition of $oGPA(\mathcal{E}_4^{12})$.
\begin{defn}\label{def:main}
  Let $q = \zeta_{24}$, and $z$ the root of the polynomial $9x^{16} - 14 x^8 +9$ with numerical value closest to $-0.996393+0.0848571 i$. We define $W\in \Hom_{oGPA(\mathcal{E}_4^{12})}(-\to ++)$ as the functional defined on basis elements by 
    \begin{align*}
  &W_{1,6,9} =\begin{cases} \sqrt{\q{2}} \quad &6\xrightarrow{\alpha} 9\\
                             0 \quad &6\xrightarrow{\beta}9\\
  \end{cases}
  && W_{2,6,9}=\begin{cases} z^{-1}\sqrt{ \frac{1}{\q{2}}} \quad &6\xrightarrow{\alpha} 9\\
                        \zeta_{24}^{19} \sqrt{ \frac{\q{3}}{\q{2}}}\quad &6\xrightarrow{\beta} 9
  \end{cases}
  && W_{3,6,9}=\begin{cases} z \sqrt{ \frac{1}{\q{2}}}\quad &6\xrightarrow{\alpha} 9\\
 \zeta_3 z\sqrt{\frac{\q{3}}{\q{4}(\q{2}+\q{3})}  } \quad &6\xrightarrow{\beta} 9\\
  \end{cases}\\
 & W_{4,6,9}=\begin{cases} \sqrt{ \frac{1}{\q{2}}} &6\xrightarrow{\alpha} 9 \\
 \zeta_8^5\sqrt{ \frac{\q{3}(\q{2} + \q{3})}{\q{4}\q{5}} } &6\xrightarrow{\beta} 9 
  \end{cases}
  && W_{5,6,9} =\begin{cases}
       \mathbf{i}z^{-1}\sqrt{ \frac{1}{\q{2}}} &6\xrightarrow{\alpha} 9 \\
        \zeta_{48}^{11}z\sqrt{ \frac{\q{4}}{\q{5}}}&6\xrightarrow{\beta} 9 
   \end{cases} \\
  & W_{3,6,7} = z\sqrt{ \frac{\q{2}(1 +\q{2})}{\q{4}}} 
   && W_{3,10,7} = z\sqrt{\frac{\q{2}^2}{\q{4}(\q{2}+\q{3})}}  
     &&  W_{3,10,9} = z\sqrt{ \frac{\q{3}(1 +\q{2})}{\q{2}\q{4}}} \\
     &  W_{4,6,7} = \zeta_8^5\sqrt{ \frac{\q{2}\q{3}}{\q{4}(1+ \q{2})}  } 
 && W_{4,10,7} =z \sqrt{\frac{\q{2}+\q{3}}{\q{4}} } 
   && W_{4,10,9}=z\sqrt{\frac{\q{3}^2}{\q{2}\q{4}(1+\q{2})} } \\
 & W_{5,6,7} = \zeta_8 \sqrt{\frac{\q{2}}{\q{3}} } 
  &&W_{5,8,7} = z\sqrt{\frac{\q{2}}{\q{3}} } 
 && W_{5,10,7} =z \sqrt{\frac{1}{\q{2}} } \\
  &W_{5,10,9} =z \sqrt{\frac{1}{\q{2}} } 
   &&W_{5,10,11} =z \sqrt{\q{2} } 
\end{align*}
with the remaining values on basis elements defined by the rotational formula $W_{a,b,c} = \sqrt{\frac{\lambda_{b}}{\lambda_c}}W_{b,c,a}$. Here we use the notation that $\zeta_\ell := e^{2\pi i \frac{1}{\ell}}$.
\end{defn}
Our main result shows that this distinguished element satisfies the relations required to give an embedding for the planar algebra of $\overline{\Rep(U_q(\mathfrak{sl}_3))}$ associated to the object $\Lambda_1$.
\begin{thm}\label{thm:main}
The map 
\[\att{Tri1}{.15} \mapsto W \in\Hom_{oGPA(\mathcal{E}_4^{12})}(-\to ++)\]
defines a tensor functor 
\[\mathcal{P}_{\overline{\Rep(U_q(\mathfrak{sl}_3))}; \Lambda_1}\to oGPA(\mathcal{E}_4^{12}).\]
\end{thm}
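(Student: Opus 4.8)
The strategy is to verify that the assignment sending the trivalent vertex generator to $W$ respects a presentation of the planar algebra $\mathcal{P}_{\overline{\Rep(U_q(\mathfrak{sl}_3))};\Lambda_1}$ by generators and relations. Since $\Lambda_1$ generates $\overline{\Rep(U_q(\mathfrak{sl}_3))}$ as a tensor category, and the $SU(3)_q$ planar algebra at $q=\zeta_{24}$ is known to admit a finite presentation with a single trivalent generator subject to the Kuperberg-type relations (the $A_2$ spider relations: the value of the loop $= [2]_q[3]_q/[1]_q$ or the appropriate normalization here, the ``bigon'' relation evaluating a digon to a multiple of a strand, the ``square'' / $I=H$ relation expressing the square diagram as a sum of two ways of joining, together with the rotational eigenvalue of the vertex), it suffices to check that $W$ satisfies exactly these finitely many local relations inside $oGPA(\mathcal{E}_4^{12})$. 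The graph planar algebra embedding theorem \cite[Theorem 1.3]{EH} then promotes this into a fully faithful tensor functor, and the image being a planar subalgebra of the graph planar algebra yields the desired module category; but for the statement of Theorem~\ref{thm:main} we only need the relation check.

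Concretely I would proceed as follows. First, fix conventions: record the defining relations of $\mathcal{P}_{\overline{\Rep(U_q(\mathfrak{sl}_3))};\Lambda_1}$ in the normalization used for $oGPA(\mathcal{E}_4^{12})$, and recall from Subsection~\ref{sub:GPA} how composition, tensor product, and the rotation act on homomorphism spaces of the oriented graph planar algebra in terms of paths on $\mathcal{E}_4^{12}$ weighted by the Frobenius--Perron vector $\lambda$. Second, observe that the rotational/duality relation $W_{a,b,c}=\sqrt{\lambda_b/\lambda_c}\,W_{b,c,a}$ is built into Definition~\ref{def:main}, so the rotational eigenvalue relation for the generator holds by construction; this reduces the problem to the ``unoriented'' local relations. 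Third, verify the digon (bigon) relation: for each edge type of $\mathcal{E}_4^{12}$, compute $\sum_{\text{paths}} W_{a,b,c}\overline{W_{a,b,c'}}$ (the contraction closing off one strand of the trivalent vertex against its dual) and check it equals the prescribed scalar times the identity on the relevant $\Hom$ space — this is a finite collection of quadratic identities in the entries listed in Definition~\ref{def:main}. Fourth, verify the crucial quartic ``$I=H$''/square relation, which says that the two planar ways of composing two trivalent vertices across a square agree up to the bigon term; concretely this becomes, for each admissible boundary path data on $\mathcal{E}_4^{12}$, an identity $\sum_b W_{a,b,d}\overline{W_{c,b,d}}\cdots = (\text{other channel}) + (\text{bigon correction})$, i.e.\ a system of polynomial equations in the $W$-entries and in $q=\zeta_{24}$ and $z$. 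One checks these are consequences of the minimal polynomial $9x^{16}-14x^8+9$ for $z$ together with the cyclotomic identities $q^{24}=1$, $\zeta_3^3=1$, etc., reducing every equation to $0$ in the ring $\mathbb{Q}(\zeta_{48})[z]/(9z^{16}-14z^8+9)$.

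The main obstacle is the fourth step: the square relation produces a sizeable but finite list of polynomial identities whose verification is genuinely computational, and the role of the auxiliary root $z$ of $9x^{16}-14x^8+9$ is precisely to make this system consistent — one expects that choosing $z$ to satisfy that polynomial (and fixing the indicated numerical branch to pin down which of the Galois conjugates makes the positivity/unitarity work) is forced by, and exactly resolves, these equations. I would organize the check by the local structure of $\mathcal{E}_4^{12}$: enumerate the finitely many ``square neighborhoods'' of the graph, and for each verify the relation; where the graph has multiplicity-two edges (the $\alpha,\beta$ edge $6\to 9$ and its images), the relation becomes a $2\times 2$ matrix identity, which is where the factors like $\zeta_{24}^{19}$, $\zeta_{48}^{11}$, and the nested radicals in the $W_{*,6,9}$ entries are needed. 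A secondary point worth isolating as a lemma is that $\mathcal{E}_4^{12}$ indeed has $\lambda$ as its Frobenius--Perron eigenvector with eigenvalue $[2]_q[3]_q/[1]_q$ (equivalently $[3]_q$ in the vector-rep normalization), since this underlies the consistency of all the contractions; this is a direct finite computation with the adjacency matrix of $\mathcal{E}_4^{12}$.
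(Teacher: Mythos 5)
Your overall approach matches the paper's: reduce to Kuperberg's $A_2$-spider presentation of the planar algebra generated by $\Lambda_1$ (rotation relation, bigon/digon relation, and the quartic ``$I=H$'' square relation), and verify that $W$ satisfies these finitely many local polynomial identities inside $oGPA(\mathcal{E}_4^{12})$ by explicit computation in $\mathbb{Q}(\zeta_{48})[z]/(9z^{16}-14z^8+9)$. That is precisely what the paper does (with a computer, attached as a Mathematica file), and your remark that the rotation relation is largely built into Definition~\ref{def:main} and that the square relation is the genuinely computational step is accurate.

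There is, however, one missing step. Kuperberg's relations present the \emph{non-semisimplified} planar algebra $\mathcal{P}_{\Rep(U_q(\mathfrak{sl}_3));\Lambda_1}$, not $\mathcal{P}_{\overline{\Rep(U_q(\mathfrak{sl}_3))};\Lambda_1}$. At $q=\zeta_{24}$ the diagrammatic category has a nontrivial negligible ideal, and the theorem asserts a functor out of the \emph{quotient} by that ideal. Verifying the relations only produces a functor $\mathcal{P}_{\Rep(U_q(\mathfrak{sl}_3));\Lambda_1}\to oGPA(\mathcal{E}_4^{12})$; you still need to explain why this functor kills the negligibles and hence descends. The paper supplies this: the map is a $\dag$-functor, $oGPA(\mathcal{E}_4^{12})$ is unitary, so the image is a unitary (hence semisimple, negligible-free) subcategory, forcing all negligible morphisms to map to zero. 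Your proposal, by contrast, passes silently from ``$W$ satisfies the local relations'' to ``we get a functor out of $\mathcal{P}_{\overline{\Rep};\Lambda_1}$,'' which is exactly the gap. A second, smaller caveat: you should also note that the explicitly listed values of $W$ together with the rotational closure $W_{a,b,c}=\sqrt{\lambda_b/\lambda_c}\,W_{b,c,a}$ must be checked for internal consistency (three-fold rotation returns to the start, and any triple whose value is given in more than one way agrees) --- this is part of relation~(), not entirely ``free by construction'' as you suggest.
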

The graph planar algebra embedding theorem \cite[Theorem 1.3]{EH} (and \cite[Theorem 1.1]{dan} for the slight technical alteration needed for our set-up) then gives the construction of the module category.
\begin{cor}\label{cor:main}
    There exists a module category $\mathcal{M}$ over $\overline{\Rep(U_q(\mathfrak{sl}_3))}$ such that the action graph for $\Lambda_1$ is $\mathcal{E}_4^{12}$.
\end{cor}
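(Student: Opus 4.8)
I will prove Theorem~\ref{thm:main}; Corollary~\ref{cor:main} is then immediate, since feeding the resulting tensor functor $\mathcal{P}_{\overline{\Rep(U_q(\mathfrak{sl}_3))};\Lambda_1}\to oGPA(\mathcal{E}_4^{12})$ into the graph planar algebra embedding theorem \cite[Theorem 1.3]{EH} — in the mildly modified form \cite[Theorem 1.1]{dan} needed for the oriented set-up — produces a module category over $\overline{\Rep(U_q(\mathfrak{sl}_3))}$ whose action graph for $\Lambda_1$ is $\mathcal{E}_4^{12}$.

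The plan for Theorem~\ref{thm:main} is to reduce it to a finite list of algebraic identities and then verify them. The source planar algebra is a quotient of Kuperberg's $\mathfrak{sl}_3$ spider: as a pivotal category it is singly generated by the trivalent vertex $\att{Tri1}{.15}$, and it is \emph{presented} by that generator modulo Kuperberg's $\mathfrak{sl}_3$ relations — the trivalent vertex is $\mathbb{Z}/3$-rotationally invariant, a bigon reduces to $\q{2}$ times a single strand, and a square web equals a fixed linear combination of its two planar resolutions into pairs of vertices — together with the extra relation forced by $q=\zeta_{24}$ being a root of unity, which cuts the tilting category down to the semisimplification $\overline{\Rep}$. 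Once this presentation is in hand (and cited), a tensor functor out of the source is precisely a choice of image for the trivalent vertex satisfying the $oGPA(\mathcal{E}_4^{12})$-images of these relations. Two of them are for free: the ``circle $=\q{3}$'' relation holds automatically because the loop parameter of $oGPA(\mathcal{E}_4^{12})$ is the Frobenius--Perron eigenvalue of $\mathcal{E}_4^{12}$, which is $\q{3}=\dim_q(\Lambda_1)$ (witnessed by the eigenvector $\lambda$), and $\mathbb{Z}/3$-rotational invariance of the trivalent vertex is built into Definition~\ref{def:main} through the prescription $W_{a,b,c}=\sqrt{\lambda_b/\lambda_c}\,W_{b,c,a}$. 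So the content is that $W$ must satisfy the bigon, square, and root-of-unity relations.

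The next step is to unwind these relations into explicit equations in the scalars $W_{a,b,c}$, using the composition and rotation operations of the oriented graph planar algebra recalled in Subsection~\ref{sub:GPA}. The $W_{a,b,c}$ are attached to the ``triangles'' $a\to b\to c\to a$ of the directed graph $\mathcal{E}_4^{12}$, refined by a choice of edge along the double edge $6\rightrightarrows 9$, and the relations become finitely many polynomial identities in these scalars over the field $\mathbb{Q}(\zeta_{24},z)$, where $z$ is the prescribed root of $9x^{16}-14x^8+9$ — equivalently $z^8$ is one of the two unit-modulus roots of $9y^2-14y+9=0$. Concretely, the bigon relation becomes a family of quadratic identities (sums over one intermediate vertex of products of two $W$-entries, weighted by ratios of the $\lambda_i$), the square (Kuperberg) relation becomes a family of quartic identities (sums over length-four closed paths of products of four $W$-entries, set equal to the two resolutions), and the root-of-unity relation unwinds into a further polynomial identity; these are exactly the Ocneanu cell-system equations studied by Evans and Pugh \cite{SU3}, now specialized to $\mathcal{E}_4^{12}$. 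Finally, one verifies each identity by substituting the values of Definition~\ref{def:main}: the bigon identities can be checked by hand, while the square and root-of-unity identities are most safely confirmed by reducing both sides to a normal form in $\mathbb{Q}(\zeta_{24},z)$ using computer algebra. I expect the defining polynomial of $z$ to enter precisely here — the square relation closing up iff $z^8$ solves $9y^2-14y+9=0$ — and part of the argument is to confirm that this single constraint on $z$, with $q=\zeta_{24}$ fixed, makes \emph{all} the relations simultaneously true, there being no residual obstruction once a gauge for the one-dimensional hom-spaces is fixed; that gauge choice is what makes the explicit values in Definition~\ref{def:main} so lopsided.

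The main obstacle is this last, computational, step: the square relation and the root-of-unity relation unwind into large sums over paths of length four and more in the eleven-vertex graph $\mathcal{E}_4^{12}$, and one must check that the deliberately asymmetric algebraic numbers of Definition~\ref{def:main} conspire to satisfy every relation at once — organizing this to be simultaneously correct and transparently checkable is the real work. A secondary, more conceptual obstacle is pinning down the exact finite presentation of $\mathcal{P}_{\overline{\Rep(U_q(\mathfrak{sl}_3))};\Lambda_1}$ at $q=\zeta_{24}$, in particular identifying the extra relation separating the semisimplification from the full tilting category, so that verifying the finite list above genuinely suffices to produce a well-defined tensor functor, and hence Corollary~\ref{cor:main}.
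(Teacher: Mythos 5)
Your treatment of the corollary itself is exactly the paper's: Theorem~\ref{thm:main} plus the graph planar algebra embedding theorem (in the oriented form of \cite[Theorem 1.1]{dan}) immediately yields the module category with action graph $\mathcal{E}_4^{12}$, and there is nothing further to do at that stage.

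Where you diverge from the paper is in how the proof of Theorem~\ref{thm:main} handles the passage from $\Rep(U_q(\mathfrak{sl}_3))$ to its semisimplification $\overline{\Rep(U_q(\mathfrak{sl}_3))}$. You propose to work directly with a presentation of $\mathcal{P}_{\overline{\Rep(U_q(\mathfrak{sl}_3))};\Lambda_1}$, which forces you to identify and verify an extra ``root-of-unity'' relation cutting the tilting/web category down to the semisimple quotient; you correctly flag pinning down this relation as an obstacle, and it is a real one, since the required relation (vanishing of a certain clasp at level $9$) is a large diagram whose verification would add substantially to the computation. The paper avoids this entirely: it verifies only Kuperberg's three relations for the \emph{non-semisimplified} $\mathcal{P}_{\Rep(U_q(\mathfrak{sl}_3));\Lambda_1}$, then observes that the map constructed from $W$ is a $\dag$-functor into the unitary category $oGPA(\mathcal{E}_4^{12})$, so its image is a unitary subcategory and all negligible morphisms are automatically sent to zero; the functor therefore descends to $\mathcal{P}_{\overline{\Rep(U_q(\mathfrak{sl}_3))};\Lambda_1}$ for free. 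Both routes produce the same functor, but the unitarity trick removes your ``secondary obstacle'' and shrinks the computer check to the bigon and square relations plus rotational invariance. One smaller point: your speculation that the minimal polynomial $9x^{16}-14x^8+9$ of $z$ ``enters precisely'' at the square relation, and that it is an iff, is not something the paper establishes or needs — $z$ is just part of the particular gauge-fixed solution that makes all the relations close, and the paper does not isolate a single relation that forces that polynomial.
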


As shown in \cite{SU3Sub}, we obtain several subfactors of the hyperfinite $\textrm{II}_1$ factor $\mathcal{R}$ as a consequence of Corollary~\ref{cor:main}. The subfactor with smallest index (= $24 \left(2+\sqrt{3}\right)$) has principal graph 
\[\att{sub}{.4}\]
The above principal graph is obtained from the graph $\mathcal{E}_4^{12}$ via the equations of \cite[Section 7]{dan}.

Our strategy for obtaining the embedding in Definition~\ref{def:main} is low-brow, but effective. We begin by numerically approximating a solution for the embedding of the element $\q{2}\cdot p_{\Lambda_2}\in \overline{\Rep(U_q(\mathfrak{sl}_3))}$ into $oGPA(\mathcal{E}_4^{12})$. As the element $\q{2}\cdot p_{\Lambda_2}$ satisfies the Hecke algebra relations, the equations governing its embedding into $oGPA(\mathcal{E}_4^{12})$ are polynomial (of max degree 3), and are amenable to numerical approximation. From this numerical approximation we can then guess exact values for most of the coefficients of the embedding. With many of the coefficients exactly determined, many of the polynomial equations governing the embedding are now linear, and can be solved exactly. This gives us a candidate for the embedding of the element $\q{2}\cdot p_{\Lambda_2}$. Using the techniques developed in \cite{dan}, we can then determine the embedding of $\att{tri1}{.15}\in\overline{\Rep(U_q(\mathfrak{sl}_3))}$ into $oGPA(\mathcal{E}_4^{12})$ presented in Definition~\ref{def:main}.

While our method of numerical approximation is ad-hoc, we have several arguments to justify our approach. The first is that the results of \cite{ModPt1} show that there is at most one embedding of $\overline{\Rep(U_q(\mathfrak{sl}_3))}$ into $oGPA(\mathcal{E}_4^{12})$. Thus we don't have to worry about missing potential solutions in our numerical approximation. The second argument is that the modular invariant we are studying lives in the finite family of ``$E_7$-like modular invariants'' again by the results of \cite{ModPt1}. In fact, there are only six physical modular invariants in this family up to level-rank duality. As such, we consider ad-hoc techniques fair game for explicitly constructing the module categories corresponding to these modular invariants.

\subsection*{Acknowledgements}
The first author was partially supported by NSF grant DMS 2245935. The second author was fully supported by NSF grant DMS 2245935 as an undergraduate researcher.

\section{Preliminaries}

We refer the reader to \cite{Book} for the basics on tensor categories and module categories. We refer the reader to \cite{Sawin,primer} for the basics on the tensor categories $\overline{\Rep(U_q(\mathfrak{g}))}$.

\subsection{Planar Algebras}

In this subsection we define planar algebras, and explain how they are can be obtained from a pivotal tensor category. We refer the reader to \cite{PA1} for additional details and examples.

For this paper we will use the non-standard categorical definition of a planar algebra.
\begin{defn}
    A planar algebra $\mathcal{P}$ is a strictly pivotal monoidal category whose objects are strings in the alphabet $\{+, -\}$.
\end{defn}
\begin{remark}
    Note that our definition of a planar algebra does not require direct sums of objects, nor that idempotents split.
\end{remark}

The standard definition of a planar algebra by action of the planar operad \cite[Section 1]{PA1} is equivalent to our non-standard definition. The action of a compatible planar tangle $T$ on a collection of morphisms $f_i \in \Hom_\mathcal{P}(s_i \to t_i)$ is given by inserting the morphisms $f_i$ into the planar tangle $T$, and evaluating the diagram as a morphism of $\mathcal{P}$ using the pivotal graphical calculus \cite[Chapter 2]{Tur}. We direct the reader to \cite[Chapter 3]{scottGPA} for detailed notes on this correspondence.

A large class of planar algebras can be constructed as special subcategories of pivotal tensor categories. These special subcategories are defined as follows.

\begin{defn}
  Let $\mathcal{C}$ be a pivotal tensor category, and $X\in \mathcal{C}$ an object. We define the planar algebra generated by $X$, which we denote $\mathcal{P}_{\mathcal{C}, X}$ as follows. The objects of $\mathcal{P}_{\mathcal{C}, X}$ are sequences in $\{+, -\}$. Let $s,t$ be two objects. We define
  \[       \operatorname{Hom}_{\mathcal{P}_{\mathcal{C}, X}}(s\to t) :=   \operatorname{Hom}_{\mathcal{C}}(X^{s_1}\otimes X^{s_2} \otimes \cdots \to X^{t_1}\otimes X^{t_2}  \otimes \cdots  )  \]
  where we understand $X^+ = X$ and $X^- = X^*$.
\end{defn}

If the object $X$ Cauchy tensor generates $\mathcal{C}$ (in the sense of \cite{EH}), then $\mathcal{P}_{\mathcal{C}, X}$ contains a projection onto every simple object of $\mathcal{C}$. Hence the Cauchy completion of $\mathcal{P}_{\mathcal{C}, X}$ is monoidally equivalent to $\mathcal{C}$. In this sense, the subcategory $\mathcal{P}_{\mathcal{C}, X}$ remembers all the information of the original category $\mathcal{C}$, while being significantly simpler.

An important example of a planar algebra is the Kazhdan-Wenzl presentation for $\operatorname{Rep}(U_q(\mathfrak{sl}_N))$. Let $X = \Lambda_1$ be the vector representation. The planar algebra $\mathcal{P}_{\operatorname{Rep}(U_q(\mathfrak{sl}_N)), \Lambda_1}$ is then described in \cite{SovietHans} via generators and relations. The generators of this planar algebra are
\[  \raisebox{-.5\height}{ \includegraphics[scale = .3]{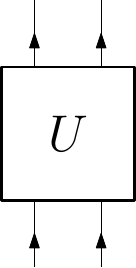}}:=[2]_q\raisebox{-.5\height}{ \includegraphics[scale = .3]{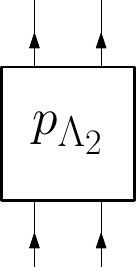}} \quad  \in \quad  \mathcal{P}_{\operatorname{Rep}(U_q(\mathfrak{sl}_N)),\Lambda_1}( +^2 \to +^2 )  \quad \text{and} \quad \att{triv}{.25} \quad \in \quad  \mathcal{P}_{\operatorname{Rep}(U_q(\mathfrak{sl}_N)),\Lambda_1}( \mathbf{1} \to +^N  )\]
The planar algebra is then constructed as the free planar algebra built from the generating morphisms (allowing duality morphisms, along with tensor products, compositions, and sums of these morphisms), modulo the generating relations. We have the following relations between the generators (which are sufficient when $q = e^{2\pi i \frac{1}{N+k}}$ for some $k\in \mathbb{N}$ by \cite{dan})
\begin{align*}
(\textrm{R1}) :  &\quad\raisebox{-.5\height}{ \includegraphics[scale = .3]{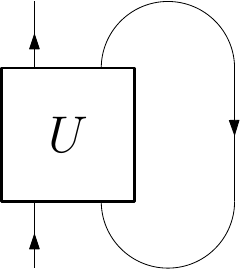}} = \raisebox{-.5\height}{ \includegraphics[scale = .3]{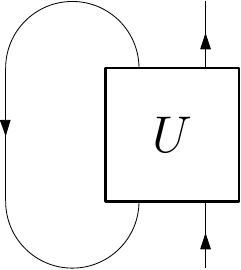}}= [N-1]_q\raisebox{-.5\height}{ \includegraphics[scale = .3]{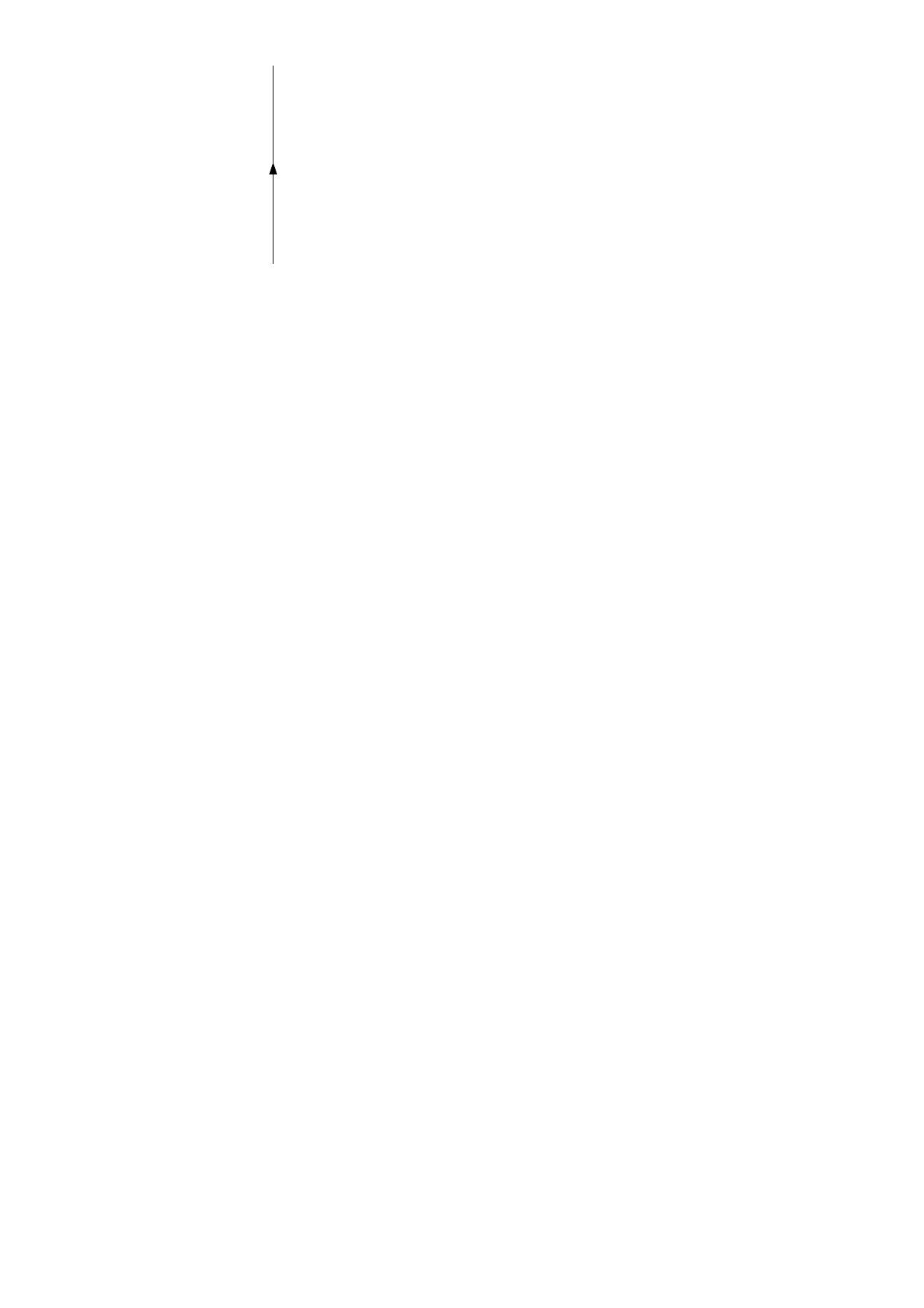}}  & (\textrm{R2}) :  &\quad \left(\raisebox{-.5\height}{ \includegraphics[scale = .3]{diagrams/UU.pdf}}\right)^\dag = \raisebox{-.5\height}{ \includegraphics[scale = .3]{diagrams/UU.pdf}}\\
(\textrm{R3}) :  &\quad\raisebox{-.5\height}{ \includegraphics[scale = .3]{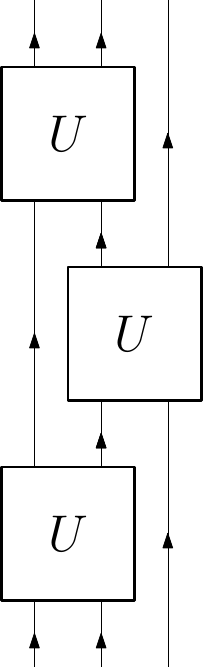}}  -\raisebox{-.5\height}{ \includegraphics[scale = .3]{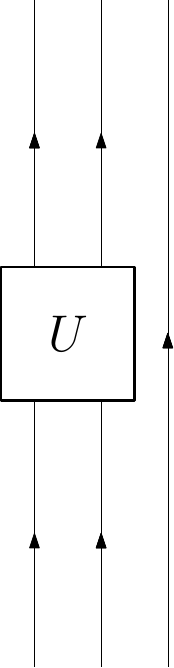}}  =\raisebox{-.5\height}{ \includegraphics[scale = .3]{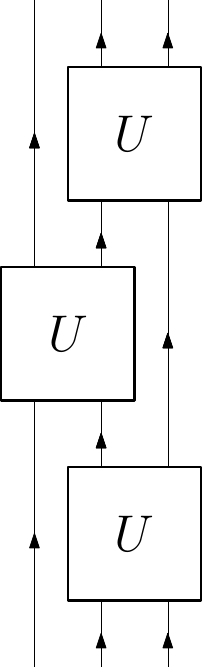}}  -\raisebox{-.5\height}{ \includegraphics[scale = .3]{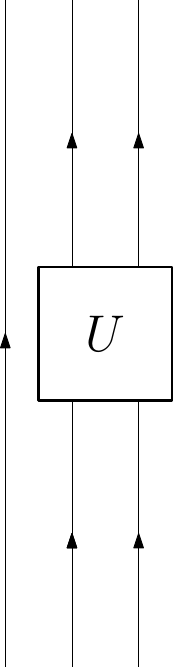}} &(\textrm{Hecke}) :  &\quad\raisebox{-.5\height}{ \includegraphics[scale = .3]{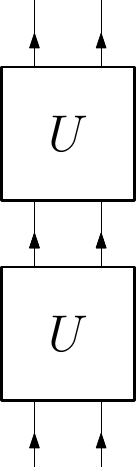}}= [2]_q\raisebox{-.5\height}{ \includegraphics[scale = .3]{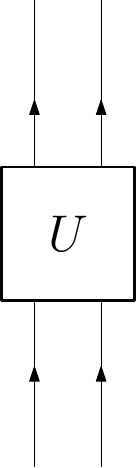}}\\
(\textrm{BA}) :  &\quad \raisebox{-.5\height}{ \includegraphics[scale = .3]{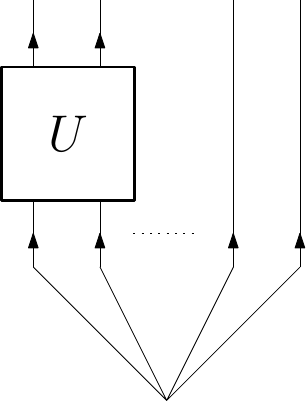}} = [2]_q \raisebox{-.5\height}{ \includegraphics[scale = .3]{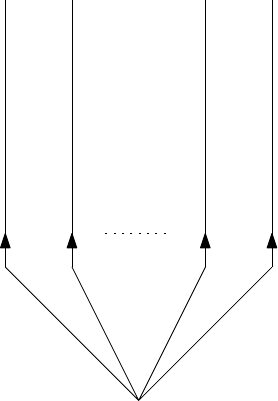}} &(\textrm{RI}) :  &\quad\raisebox{-.5\height}{ \includegraphics[scale = .3]{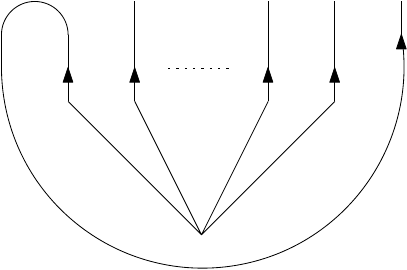}} =  (-1)^{N+1}\raisebox{-.5\height}{ \includegraphics[scale = .3]{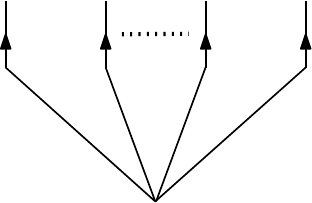}}\\
(\textrm{U}) :  &\quad \raisebox{-.5\height}{ \includegraphics[scale = .3]{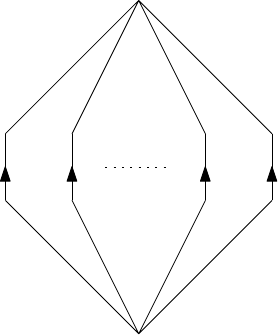}}=1
\end{align*}

Note that for this paper, we will be specialised to the case where $N=3$ and $q = e^{2\pi i \frac{1}{24}}$.
\subsection{The graph planar algebra}\label{sub:GPA}
A key planar algebra used in this paper is the graph planar algebra constructed from a graph $\Gamma$ and a Frobenius-Perron eigenvector $\lambda$ for $\Gamma$. The construction of the graph planar algebra is due to Jones \cite{OGGPA}. 

\begin{defn}
    Let $\Gamma$ be a graph, and $\lambda$ a positive Frobenius-Perron eigenvector for $\Gamma$. We define the planar algebra $oGPA(\Gamma)$ as follows. For two strings $s$ and $t$ in $\{+,-\}$ we define
    \[ \operatorname{Hom}_{oGPA(\Gamma)}(s \to t)  \cong \operatorname{span}_{\mathbb{C}}\{    (p,q) : p \text{ is a $s$ path}, q \text{ is a $t$ path}, s(p) = s(q), t(p) = t(q)\}.\]  
    The composition and tensor product operators are defined on basis elements by
    \begin{align*}
    (p',q')\circ (p,q) &= \delta_{q',p} (p',q)\\
    (p,q) \otimes (p',q') &= \delta_{t(p), s(p')}\delta_{t(q), s(q')}(pp', qq')
\end{align*}
which are extended linearly. We define distinguished rigidity maps by
\begin{align*}
\operatorname{ev}_{(+,-)}   :=   \sum_{\text{$(e,\overline{e})$ a $(+,-)$-path}} \sqrt{\frac{\lambda_{t(e)}}{\lambda_{s(e)}}}((e,\overline{e}),s(e)):   (+,-) \to \mathbbm{1}\\
\operatorname{coev}_{(-,+)}   :=   \sum_{\text{$(\overline{e},e)$ a $(-,+)$-path}} \sqrt{  \frac{\lambda_{s(e)}}{\lambda_{t(e)}}}( t(e),(\overline{e},e)):  \mathbbm{1} \to  (-,+)  \\
\operatorname{ev}_{(-,+)}   :=   \sum_{\text{$(\overline{e},e)$ a $(-,+)$-path}}  \sqrt{\frac{\lambda_{s(e)}}{\lambda_{t(e)}}}((\overline{e},e), t(e)):   (-,+) \to \mathbbm{1}\\
\operatorname{coev}_{(+,-)}   :=   \sum_{\text{$(e,\overline{e})$ a $(+,-)$-path}} \sqrt{\frac{\lambda_{t(e)}}{\lambda_{s(e)}}}( s(e),(e,\overline{e})):  \mathbbm{1} \to  (+,-).
\end{align*}
\end{defn}
It is well-known (and straightforward to verify) that $oGPA(\Gamma)$ satisfies the conditions to be a pivotal monoidal category, and hence is a planar algebra. 

The planar algebra $oGPA(\Gamma)$ can be equipped with a $\dag$ structure given by the anti-linear extension of
\[   (p,q)^\dag = (q,p).  \]
With this dagger structure, $oGPA(\Gamma)$ is unitary. We refer the reader to \cite[Section 2.2]{Haag} and \cite[Section 2.2]{dan} for more details on the category $oGPA(\Gamma)$.

The graph planar algebra is useful for this paper due to the graph planar algebra embedding theorem \cite[Theorem 1.3]{EH} (see \cite[Theorem 1.1]{dan} for the slight alteration needed for the non-self-dual setting). This result shows that module categories over a pivotal tensor category are classified by embeddings of an associated planar algebra into graph planar algebras. This allows us to obtain Corollary~\ref{cor:main} from Theorem~\ref{thm:main}.
\section{Finding the solution}

Our first goal is to find an embedding of the element $\att{UU}{.25} \in \Rep(U_q(\mathfrak{sl}_3))$ into $oGPA(\mathcal{E}^{12}_4)$. That is, we need find an element of $\Hom_{oGPA(\mathcal{E}^{12}_4)}(++\to ++)$ satisfying relations (R1), (R2), (R3), and (Hecke). From the definition of the graph planar algebra, this is exactly the assignment of a complex variable to each pair $(v_1 \xrightarrow{\gamma_1} v_2 \xrightarrow{\gamma_2} v_3 ,v_1 \xrightarrow{\gamma_3} v_4 \xrightarrow{\gamma_4} v_3)$ of paths in $\mathcal{E}^{12}_4$, satisfying various polynomial equations which ensure the four above relations hold.

We will use the ``Boltzmann Weight'' notation (as in \cite{SU3}) to represent the coefficient attached to a pair $(v_1 \xrightarrow{\gamma_1} v_2 \xrightarrow{\gamma_2} v_3 ,v_1 \xrightarrow{\gamma_3} v_4 \xrightarrow{\gamma_4} v_3)$. This coefficient will be represented as $U^{v_1, {}^{\gamma_1}v_2^{\gamma_2}}_{{}^{\gamma_3}v_4^{\gamma_4},v_3}$. In the case that there is a single edge between two vertices, the label denoting the edge will be unambiguously suppressed. 

This notation gives a compact matrix notation for presenting our element of $\Hom_{oGPA(\mathcal{E}^{12}_4)}(++\to ++)$. For fixed vertices $v_1, v_3$ we have the matrix
\[U^{v_1}_{\quad v_2}:= \begin{bmatrix}
U^{v_1, {}^{\gamma_a}v_i^{\gamma_b}}_{{}^{\gamma_c}v_j^{\gamma_d},v_2}
\end{bmatrix}_{        {}^{\gamma_a}v_i^{\gamma_b}   ,    {}^{\gamma_c}v_j^{\gamma_d}     }\]
with row and columns indexed by all paths of length 2 beginning at $v_1$ and ending at $v_2$. 

To find the embedding of $\att{UU}{.25}$ into $oGPA(\mathcal{E}^{12}_4)$, we have to solve a large system of polynomial equations. This proved too hard for us to do directly. Instead, we begin by finding a numerical solution, with the end goal being to guess the exact solution. An issue immediately arises in that the gauge group of the embedding is $U(2)\oplus U(1)^{23}$. A numerical solution is going to find a random point in this solution space. This means that any numerical solution we find will be unrecognisable as an exact solution. To fix this we note that the matrix $U^1_{\quad 9}$ is a $2 \times 2$ projection satisfying $(U^1_{\quad 9})^2 = \q{2} \cdot U^1_{\quad 9}$ by (Hecke), and with trace $\q{2}$ by \cite[Lemma 5.6]{dan}. This means we can unitarily conjugate $U^1_{\quad 9}$ by an element of $U(2)$ to arrange that 
\[U^1_{\quad 9}= \begin{blockarray}{cc}
6^\alpha & 6^\beta \\
\begin{block}{[cc]}
\q{2} & 0 \\
0 & 0\\
\end{block}
\end{blockarray}\]
This uses up the $U(2)$ degree of freedom, up to the $U(1)\oplus U(1)$ diagonal subgroup of this $U(2)$. Thus with this fixed choice of $U^1_{\quad 9}$ as above, we have a gauge group of $U(1)^{25}$. In particular, this means that the absolute values of the coefficients in our solution are now fixed.

We now numerically approximate a solution for the remaining coefficients. As expected, the phases on these coefficients are unrecognisable (as the numerical approximation picks out a random point in the solution space $U(1)^{25}$). However, many of the absolute values (which are invariant under the action of $U(1)^{25}$) of our numerical coefficients can be immediately identified. The distinct numerical values in our numerical solution for which we can make guesses for their exact values are as follows:

\begin{center}\resizebox{\hsize}{!}{\begin{tabular}{c | c c c c c}\hline\hline
    Numerical Value & 0& 0.175067 & 0.207107 &0.239146 & 0.341081 \\ \hline
  Exact Guess      & 0&  $\frac{1}{\q{4}}\left(\q{2} + \frac{\q{3}^2}{\q{5}} \right)-1$& $\frac{\q{2}}{\q{3}}\left(1 + \frac{\q{2}}{\q{3}} \right)-1$&$\frac{\q{3}}{\q{4}}\left(1 + \frac{1}{\q{2}} \right)-1$&$\frac{1}{\q{3}}\left(\q{2} + \frac{\q{4}}{\q{3}} \right)-1$\\ \hline\hline
   Numerical Value & 0.366025& 0.393847 & 0.439158 &0.481717 & 0.5 \\\hline
  Exact Guess      & $\frac{1}{\q{3}}$& $\frac{1}{\q{4}}\left(\q{2} + \q{3} \right)-1$& $\frac{\q{2}}{\q{3}}+\frac{\q{3}}{\q{5}}-1$&$\sqrt{\frac{\q{4}}{\q{2}\q{3}^2}}$&$\frac{1}{2}$\\ \hline\hline
  Numerical Value & 0.517638& 0.538005 & 0.605 &0.68125  & 0.707107 \\\hline
  Exact Guess      & $\frac{1}{\q{2}}$& $\frac{1}{\q{4}}\left(\q{5} + \frac{\q{3}}{\q{2}} \right)-1$& $\sqrt{\frac{1}{\q{3}}}$& $ \sqrt{\frac{\q{4}}{\q{2}^3}}  $&$\frac{1}{\sqrt{2}}$\\ \hline\hline
  Numerical Value & 0.745315& 0.790471 &0.800893 &0.8556  &0.865966 \\\hline
  Exact Guess      & $\sqrt{\frac{1}{\q{3}}\left( 1 + \frac{1}{\q{2}}  \right)}$&  $\sqrt{\frac{1}{\q{3}}\left( 1 + \frac{\q{2}}{\q{3}}  \right)}$&$\sqrt{\frac{\q{3}}{\q{2}\q{4}}\left( 1 + \frac{1}{\q{2}}  \right)}$& $\sqrt{\frac{ \q{3}}{\q{2}^2}}$& $\sqrt{ \frac{\q{2}}{\q{4}}\left(1 + \frac{1}{\q{4}} \right)  }$\\ \hline\hline
  Numerical Value & 0.896575& 0.975056 &1.020367 &1.035276 &1.07313 \\\hline
  Exact Guess      & $\frac{\q{4}}{\q{5}}$&  $\frac{1}{\q{5}} + \frac{\q{2}}{\q{3}}$&$\frac{\q{3}}{\q{2}\q{4}}\left(  1 + \frac{\q{3}}{\q{2}} \right)$& $\frac{1}{\q{3}}\left(\q{2} + \frac{\q{4}}{\q{5}} \right)$&$\sqrt{ \frac{1}{\q{2}}\left(1 + \frac{\q{4}}{\q{3}} \right)  }$\\ \hline\hline
  Numerical Value & 1.207107& 1.239146 &1.393847 &1.41421  &1.692705 \\\hline
  Exact Guess      & $\frac{\q{2}}{\q{3}}\left(1 + \frac{\q{2}}{\q{3}} \right)$&  $\frac{\q{3}}{\q{4}}\left(1 + \frac{1}{\q{2}} \right)$&$\frac{1}{\q{4}}\left(\q{2} + \q{3} \right)$& $\sqrt{2}$&$\frac{\q{2}}{\q{4}}\left(1 + \q{2} \right)$\\ \hline\hline
  Numerical Value & 1.93185\\\hline
  Exact Guess      & $\q{2}$\\ \hline\hline
\end{tabular}}\end{center}

Note that we could not (at this point) make guesses for the exact values of the following numerical values appearing in the solution
\[\{0.239691,0.301034,0.327424,0.420187,0.45152,0.465762,0.578665,0.633931,0.636242,0.72676\}.\]

Many of these exact values are for diagonal coefficients of our solution. i.e. coefficients of the form $U^{v_1,{}^{\gamma_1}v_2^{\gamma_2}}_{{}^{\gamma_1}v_2^{\gamma_2},v_3}$. These coefficients are real by (R2), and are gauge invariant. Hence the phase of these coefficients is $\pm 1$, and can easily be seen from the numerical solution. For the off-diagonal coefficients, we have a large amount of freedom in the phases due to the $U(1)^{25}$ gauge group. Using up this freedom allows us to set the phases of many of these coefficients to $1$.

We now have (a guess for) the exact value of many of the coefficients for our solution. At this point many of the equations of (Hecke) and (R3) are now linear. Solving these linear equations gives exact values for the remaining coefficients for which we could not guess an exact value for. Furthermore, these linear equations allow us to pin down all the remaining free phases in our solution.

As we now have an exact value for all of the coefficients in our solution, we thus have an element of $\Hom_{oGPA(\mathcal{E}_4^{12})}(++\to ++)$, which we suspect to be the embedding of $\att{UU}{.25}$ in $oGPA(\mathcal{E}_4^{12})$. We then obtain a potential embedding of $\att{Tri1}{.15}$ in $oGPA(\mathcal{E}_4^{12})$ by solving the linear equations (RI) and (BA), and normalising with (U).

The potential solution to the embedding of $\att{Tri1}{.15}\mapsto \Hom_{oGPA(\mathcal{E}_4^{12})}(-\to ++)$ is given in Definition~\ref{def:main}. Here we use a slight alteration of Boltzmann weight notation, with the value $W_{v_1,v_2,v_3}$ representing the coefficient of the basis element $( v_1 \xleftarrow{\gamma_3} v_3   ,v_1 \xrightarrow{\gamma_1} v_2 \xrightarrow{\gamma_2} v_3  )$ with edge labels surpessed unless needed.

To give the reader some idea of the structure of the solution for the embedding of $\att{UU}{.25}$, we include the single $5\times 5$ block and three $3\times 3$ blocks.

\[\resizebox{\hsize}{!}{$ U^{9}_{\quad 6}  = \begin{blockarray}{ccccc}
1 & 2&3&4&5 \\
\begin{block}{[ccccc]}
\frac{\q{2}}{\q{3}} & z\frac{1}{\q{3}} & z^{-1}\sqrt{\frac{\q{4}}{\q{2}\q{3}^2}}&\sqrt{\frac{\q{4}}{\q{2}\q{3}^2}}& -\mathbf{i}z \frac{1}{\q{2}}\\
z^{-1}\frac{1}{\q{3}}& \frac{1 + \q{3}}{\q{2}\q{3}\q{5}}&\sqrt{\frac{\q{4}}{\q{5}^3\q{3}^2}}\left(z^{-2} + z^{-1} \cdot \zeta_{24}^{11}\sqrt{\frac{\q{3}^2}{\q{2}\q{4}(\q{2}+\q{3})}}   \right)& \sqrt{\frac{\q{4}}{\q{2}\q{3}^2\q{5}}}\left(z^{-1} + \zeta_6 \sqrt{   \frac{\q{3}^2(\q{2}+\q{3})}{\q{2}\q{4}}}   \right)& \frac{1}{\q{2}^2}\left(  -\mathbf{i} + z^{-1}\zeta_{16}^9\sqrt{\frac{\q{3}\q{4}}{\q{2}}}   \right)\\
z\sqrt{\frac{\q{4}}{\q{2}\q{3}^2}}&\sqrt{\frac{\q{4}}{\q{5}^3\q{3}^2}}\left(z^{2} + z^{1} \cdot \zeta_{24}^{13}\sqrt{\frac{\q{3}^2}{\q{2}\q{4}(\q{2}+\q{3})}}   \right)& \frac{\q{4}}{\q{2}\q{3}}\left( \frac{1}{\q{2}} + \frac{\q{3}}{(\q{2}+\q{3})\q{4}}   \right)& \frac{\q{4}}{\q{3}\q{5}}\left(z+z\cdot \zeta_{24}^{17}\frac{\q{3}}{\q{4}}  \right)& \sqrt{\frac{\q{4}}{\q{2}\q{5}^2}}\left(  -\mathbf{i} z^2 + \zeta_{48}^{5}\sqrt{\frac{\q{3}}{\q{2}+\q{3}}}\right)\\
\sqrt{\frac{\q{4}}{\q{2}\q{3}^2}}& \sqrt{\frac{\q{4}}{\q{2}\q{3}^2\q{5}}}\left(z + \zeta_6^5 \sqrt{   \frac{\q{3}^2(\q{2}+\q{3})}{\q{2}\q{4}}}   \right)& \frac{\q{4}}{\q{3}\q{5}}\left(z^{-1}+z^{-1}\cdot \zeta_{24}^{7}\frac{\q{3}}{\q{4}}  \right)& \frac{\q{4}}{\q{2}^2\q{3}}\left( 1 + \frac{\q{3}(\q{2}+\q{3})}{\q{4}}\right) & \sqrt{\frac{\q{4}}{\q{2}\q{5}^2}}\left( -\mathbf{i}z + z^{-1} \cdot \zeta_{48}^{19}\sqrt{\frac{\q{3}(\q{2}+\q{3}}{\q{5}}}  \right)\\
\mathbf{i}z^{-1} \frac{1}{\q{2}}&\frac{1}{\q{2}^2}\left(  \mathbf{i} + z\cdot \zeta_{16}^7\sqrt{\frac{\q{3}\q{4}}{\q{2}}}   \right)&\sqrt{\frac{\q{4}}{\q{2}\q{5}^2}}\left(  \mathbf{i} z^{-2} + \zeta_{48}^{43}\sqrt{\frac{\q{3}}{\q{2}+\q{3}}}\right)&\sqrt{\frac{\q{4}}{\q{2}\q{5}^2}}\left( \mathbf{i}z^{-1} + z^{} \cdot \zeta_{48}^{29}\sqrt{\frac{\q{3}(\q{2}+\q{3}}{\q{5}}}  \right)& \frac{\q{3}}{\q{2}\q{5}}\left(    1 + \frac{\q{4}}{\q{2}}\right)\\
\end{block} \end{blockarray}$} \]
\[ U^{3}_{\quad 9} =   \begin{blockarray}{ccc}
6^\alpha & 6^\beta&10\\
\begin{block}{[ccc]}
\frac{1}{\q{2}} & \zeta_3^2 \sqrt{\frac{(1+\q{2})(\q{4}(\q{2}-1) - \q{3})}{\q{2}^2\q{4}}} &   \sqrt{\frac{\q{3}}{\q{2}\q{4}}\left( 1 + \frac{1}{\q{2}}  \right)} \\
\zeta_3 \sqrt{\frac{(1+\q{2})(\q{4}(\q{2}-1) - \q{3})}{\q{2}^2\q{4}}} & \frac{(1+\q{2})(\q{2}^2-\q{4})}{\q{2}\q{4}} & \zeta_3 \frac{1 + \q{2}}{\q{2}\q{4}}\sqrt{\q{3}(\q{5}-\q{4})}\\
 \sqrt{\frac{\q{3}}{\q{2}\q{4}}\left( 1 + \frac{1}{\q{2}}  \right)} & \zeta_3^2 \frac{1 + \q{2}}{\q{2}\q{4}}\sqrt{\q{3}(\q{5}-\q{4})} & \frac{\q{3}}{\q{4}} + \frac{\q{3}}{\q{2}\q{4}}\\
\end{block} \end{blockarray}    \]
\[ U^{4}_{\quad 9} =   \begin{blockarray}{ccc}
6^\alpha & 6^\beta&10\\
\begin{block}{[ccc]}
\frac{1}{\q{2}} &\zeta_8^3 \sqrt{\frac{\q{3}(\q{2}+\q{3})}{\q{2}\q{4}\q{5}}}&  z^{-1}\sqrt{\frac{\q{3}^2}{\q{2}^2\q{4}(1 +\q{2})}} \\
\zeta_8^5 \sqrt{\frac{\q{3}(\q{2}+\q{3})}{\q{2}\q{4}\q{5}}} &\frac{\q{3}}{\q{2}\q{4}}\left(1+\frac{\q{3}}{\q{2}}  \right) & z^{-1}\zeta_8^{5}\sqrt{ \frac{\q{3}^3(\q{2}+\q{3}}{\q{2}\q{4}^2\q{5}(1 + \q{2})} }\\
z\sqrt{\frac{\q{3}^2}{\q{2}^2\q{4}(1 +\q{2})}} & z\zeta_8^{3}\sqrt{ \frac{\q{3}^3(\q{2}+\q{3}}{\q{2}\q{4}^2\q{5}(1 + \q{2})} } & \frac{\q{2}\q{4}(\q{2}^2-1) - \q{3}(\q{2}+\q{3}}{\q{4}\q{5}}\\
\end{block} \end{blockarray}    \]
\[ U^{5}_{\quad 9} =   \begin{blockarray}{ccc}
6^\alpha & 6^\beta&10\\
\begin{block}{[ccc]}
\frac{1}{\q{2}}  &\zeta_{48}z^{-2} \sqrt{\frac{\q{4}}{\q{2}\q{5}}} & \mathbf{i}z^{-2} \frac{1}{\q{2}}\\ 
\zeta_{48}^{-1}z^{2}\sqrt{\frac{\q{4}}{\q{2}\q{5}}}  &\frac{\q{4}}{\q{5}} &  \zeta_{48}^{11}\sqrt{\frac{\q{4}}{\q{2}\q{5}}} \\ 
-\mathbf{i}z^{2} \frac{1}{\q{2}}  & \zeta_{48}^{37}\sqrt{\frac{\q{4}}{\q{2}\q{5}}}  &  \frac{1}{\q{2}}\\
\end{block} \end{blockarray}    \]
The remaining blocks can be recovered from the embedding of $\att{Tri1}{.15}$ using the formula
\[\att{UU}{.35} := \q{2}\att{p2}{.35} \quad=\quad  \att{Proj}{.2}    .\]

\section{Verifying the solution}

From the previous section, we have a candidate for an exact solution to an embedding $\mathcal{P}_{\overline{\Rep(U_q(\mathfrak{sl}_3))}; \Lambda_1}\to oGPA(\mathcal{E}_4^{12})$. To show that this candidate solution is indeed an exact solution, we have to verify equations (R1), (R2), (Hecke), (RI), (BA), (U), and (R3). The first six are quickly verified by Mathematica. However, to verify (R3) we need to check that 1251 individual cubic equations hold. As the degree (as an algebraic number) of many of our coefficients is 16, these equations are extremely computationally expensive to verify. Optimistically we ran a computer for 72 hours attempting to verify the (R3) equations, however no result was obtained.

To get around this computational roadblock, we observe that the coefficients of the embedding of $\att{Tri1}{.15}$ are significantly nicer than the coefficients of the embedding of $\att{UU}{.25}$. As shown in \cite{Kup}, the category $\mathcal{P}_{\Rep(U_q(\mathfrak{sl}_3)); \Lambda_1}$ has an alternate presentation given in terms of the single generator $\att{Tri1}{.15}$. The relations of this presentation are as follows:
\begin{align*}
\textrm{()}&: \quad \att{TriTau}{.3} \quad=\quad \att{Tri1}{.3}\\
   \textrm{(i)}&: \quad \att{Big1}{.15} \quad=\quad \q{2}\att{Big2}{.15}\\
    \textrm{(ii)}&: \quad \att{Quad1}{.15} \quad=\quad \att{Quad2}{.15} \quad+\quad\att{Quad3}{.15} \\
\end{align*}
Hence if we can verify the above three relations, we will show that our potential solution indeed defines an embedding $\mathcal{P}_{\overline{\Rep(U_q(\mathfrak{sl}_3))}; \Lambda_1}\to oGPA(\mathcal{E}_4^{12})$. While relation (ii) is quartic, the simpler form of the algebraic numbers for the coefficients of the embedding of $\att{Tri1}{.2}$ means that these equations are much easier for the computer to verify. Helping our cause is the fact that there are only 171 individual equations to verify for relation (ii). This allows us to give a proof of Theorem~\ref{thm:main}.
\begin{proof}[Proof of Theorem~\ref{thm:main}]
We directly verify that the element of $oGPA(\mathcal{E}_4^{12})$ given in Definition~\ref{def:main} satisfies relations (), (i), and (ii) using a computer. A Mathematica file containing the solution and this verification can be found attached to the arXiv submission of this paper. This gives a $\dag$-embedding of $\mathcal{P}_{\Rep(U_q(\mathfrak{sl}_3)); \Lambda_1}\to oGPA(\mathcal{E}_4^{12})$. As $oGPA(\mathcal{E}_4^{12})$ is unitary, we have that the image of $\mathcal{P}_{\Rep(U_q(\mathfrak{sl}_3)); \Lambda_1}$ in $oGPA(\mathcal{E}_4^{12})$ is a unitary subcategory. In particular all negligible elements of $\mathcal{P}_{\Rep(U_q(\mathfrak{sl}_3)); \Lambda_1}$ are mapped to zero. Thus we get an embedding $\mathcal{P}_{\overline{\Rep(U_q(\mathfrak{sl}_3))}; \Lambda_1}\to oGPA(\mathcal{E}_4^{12})$ as desired.
\end{proof}
\bibliography{main}
\bibliographystyle{alpha}
\end{document}